\theoremstyle{plain}
\newtheorem{thm}{Theorem}[section]
\newtheorem*{mt*}{Main Theorem}
\newtheorem*{cj*}{Conjecture}
\newtheorem*{nt*}{Notations}
\newtheorem{prop}[thm]{Proposition}
\newtheorem{cor}{Corollary}
\newtheorem{rem}{Remark}
\newtheorem{example}{Example}[section]
\theoremstyle{definition}
\newcommand{\ideal}[1]{\mathfrak{#1}}
\newcommand{\M}{\ideal{M}}
\newcommand{\m}{\ideal{m}}
\newcommand{\n}{\ideal{n}}
\newcommand{\func}[1]{\mathrm{#1} \,}
\newcommand{\Ass}{\func{Ass}}
\newcommand{\rad}{\func{rad}}
\newcommand{\Ker}{\func{Ker}}
\newcommand{\Coker}{\func{Coker}}
\newcommand{\ZZ}{{\mathbb Z}}
\title[]{A Note on Associated Primes and Bockstein Homomorphisms of Local Cohomology Modules for Ramified Regular Local Rings}
\author[]{Rajsekhar Bhattacharyya}
\address{Dinabandhu Andrews College, Garia, Kolkata 700084, India}
\email{rbhattacharyya@gmail.com}
\thanks{}
\keywords{Local Cohomology}
\subjclass[2010]{13D45}
\begin{document}

\begin{abstract}
For a Noetherian regular ring $S$ and for a fixed ideal $J\subset S$, assume that the associated primes of local cohomology module $H^i_J(S)$ does not contain $p$ for some $i\geq 0$, and we call this as a property $\textit{\textbf{P}}^{i,p}_{J,S}$ or $\textit{\textbf{P}}$ for brevity. Recently, in Theorem 1.2 of \cite{Nu1}, it is proved that in a Noetherian regular local ring $S$ and for a fixed ideal $J\subset S$, associated primes of local cohomology module $H^i_J(S)$ for $i\geq 0$ is finite, if it does not contain $p$. In this paper, we study how the property $\textit{\textbf{P}}$ (as mentioned above) can come down from unramified regular ring to ramified regular local ring. In \cite{SW}, Bockstein homomorphism is studied in the context to the finiteness of associated primes of local cohomology modules for the ring of integers. There it is shown that if $p$ is nonzero divisor of Koszul homology then Bockstein homomorphism is a zero map (see, Theorem 3.1 of \cite{SW}). Here, in this paper, as a consequence of property $\textit{\textbf{P}}$, we extend the result of Theorem 3.1 of \cite{SW} to the ramified regular local ring.
\end{abstract}

\maketitle

\section{introduction}

In Commutative Algebra, an important direction of research is to determine when the set of associated primes of the $i$-th local cohomology module $H^i_I (R)$ with support in ideal $I\subset R$ is finite. There are several important cases where we have the finiteness of associated primes of local cohomology modules and we list them below: (1) For regular rings of prime characteristic \cite{HS}, (2) for regular local and affine rings of characteristic zero \cite{Ly1}, (3) for unramified regular local rings of mixed characteristic \cite{Ly2} and (4) for smooth algebra over $\ZZ$ and $\ZZ_{p\ZZ}$ \cite{BBLSZ}. These results support Lyubeznik's conjecture, (\cite{Ly1}, Remark 3.7): 

\begin{cj*} Let $R$ be a regular ring and $I\subset R$ be its ideal, then for each $i\geq 0$, $i$-th local cohomology module $H^i_I (R)$ has finitely many associated prime ideals.
\end{cj*}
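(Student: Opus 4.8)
The plan is to attack this conjecture characteristic by characteristic, in each regime endowing the modules $H^i_I(R)$ with an additional operator structure that forces a finiteness strong enough to bound the set $\Ass H^i_I(R)$. In every settled case the finiteness of associated primes comes not from the $R$-module structure alone but from the action of a ring of operators --- the Frobenius, or a ring of differential operators --- under which $H^i_I(R)$ becomes an object of finite type in a suitable abelian category; finiteness of associated primes is then read off from a finite-length or Noetherianity statement in that category. So the first thing I would do is isolate this common mechanism and reduce the conjecture, in each characteristic, to producing the relevant structure.

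In prime characteristic the step is to exhibit $H^i_I(R)$ as an $F$-finite module in Lyubeznik's sense: it carries a Frobenius action and admits a finitely generated root, from which one shows directly that an $F$-finite module has only finitely many associated primes, settling case (1). In equicharacteristic zero the Frobenius is replaced by the ring $\mathcal{D}$ of $R$-linear differential operators, and the corresponding step is to verify that $H^i_I(R)$ is holonomic over $\mathcal{D}$; holonomic modules have finite length, and a module of finite length has only finitely many associated primes, which gives case (2). The mixed-characteristic unramified case (3), and the smooth $\ZZ$-algebra case (4), proceed by a hybrid of these two ideas, so that the heart of the problem is concentrated in the remaining regime.

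The hard part is the ramified mixed-characteristic case, where $p$ lies in $\m^2$ and neither the Frobenius nor the differential-operator machinery is available in a form that bounds $\Ass H^i_I(R)$ directly. Here the plan is to use the descent strategy of the present paper: realise the ramified regular local ring $(S,\m)$ as a quotient or specialisation of an unramified one, verify the property $\textit{\textbf{P}}$ upstairs (that no associated prime of $H^i_J(S)$ contains $p$), and then transport finiteness down along the change of rings, controlling $\Ass$ under base change. The crux, and the reason the conjecture remains open, is that the base-change comparison for associated primes is not exact when $p \in \m^2$; making property $\textit{\textbf{P}}$ descend unconditionally, rather than assuming it, is exactly the obstacle, and it is this conditional descent that the paper establishes as a step toward the general case.
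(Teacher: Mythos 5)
The statement you were asked to prove is Lyubeznik's Conjecture, and the paper does not prove it --- it is stated precisely as a conjecture, and the paper says explicitly that it remains open for ramified regular local rings, more precisely for the associated primes containing $p$. So there is no ``paper's own proof'' to match; the relevant question is whether your proposal actually closes the gap, and it does not. Your first two paragraphs are an accurate survey of the settled cases: $F$-module/Frobenius methods in prime characteristic (\cite{HS}, \cite{Ly2}), $D$-module holonomicity in equicharacteristic zero (\cite{Ly1}), and the mixed methods of \cite{BBLSZ} for smooth $\ZZ$-algebras. But a survey of known cases plus a named strategy for the remaining case is a research program, not a proof, and your own closing sentence concedes this: you say the unconditional descent of property $\textit{\textbf{P}}$ ``is exactly the obstacle'' and that the paper's descent is \emph{conditional}.

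To be concrete about where the plan would fail if you tried to execute it: the descent results available (Theorems 2.2, 2.3 and 3.1 of this paper) require as a hypothesis that $p\notin \Ass_{\ZZ} H^{i}_{I}(R[X])\cup \Ass_{\ZZ} H^{i+1}_{I}(R[X])$ upstairs, and they only treat ideals $J\subset S$ that are extended, i.e.\ $J=IS$ (or satisfy a radical-comparison condition $\rad(J+p)=\rad(IS+p)$). Neither restriction can be removed by the argument given: an arbitrary ideal of the ramified ring $S$ need not be the expansion of an ideal from $R[X]$ in a way compatible with the Eisenstein presentation $S=R[X]/f(X)$, and even for extended ideals the hypothesis on the pair $H^i$, $H^{i+1}$ upstairs is not known to hold in general --- indeed Remark 5.1 of the paper, via Example 4.1 of \cite{SW}, exhibits a case where the Bockstein map is nonzero and $p$ \emph{is} a zerodivisor upstairs, so the hypothesis genuinely fails for some ideals. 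Moreover, by Theorem 1.2 of \cite{Nu1} the primes of $\Ass_S H^i_J(S)$ not containing $p$ already form a finite set unconditionally; the entire content of the conjecture in the ramified case is about the primes containing $p$, which is precisely the case your mechanism (and the paper's) cannot reach without the unproved hypothesis. So the proposal identifies the right obstruction but does not overcome it; the statement remains a conjecture.
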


The conjecture is open for the ramified regular local ring, in fact, more specifically, it is open only for the primes that contain $p$ (see \cite{Nu1}). Recently, in \cite{Pu2}, it is shown that for excellent regular ring $R$ of dimension $d$, containing a field of characteristic zero, for an ideal $I\subset R$, $\Ass_R H^{d-1}_I (R)$ is finite. 

Finiteness condition of associated primes is very deeply related to the $p$-torsion elements. In another way, it is important to study when a prime integer $p$ in the ring does not belong to the set of associated primes of local cohomology module. More precisely, for a Noetherian regular ring $S$ and for a fixed ideal $J\subset S$, if associated primes of local cohomology $H^i_J(S)$ does not contain $p$ for some $i\geq 0$, then we call this as a property $\textit{\textbf{P}}^{i,p}_{J,S}$ i.e. `$\textit{\textbf{P}}^{i,p}_{J,S}\equiv p\notin \Ass_{\ZZ} H^i_J(S)$' and we will write only $\textit{\textbf{P}}$ for brevity or when integers $i$, $p$, ring $S$ and ideal $J$ is clear from the context. Recently, in Theorem 1.2 of \cite{Nu1}, it is proved that in a Noetherian regular local ring $S$ and for a fixed ideal $J\subset S$, associated primes of local cohomology $H^i_J(S)$ for $i\geq 0$ is finite, if it does not contain $p$. Since associated primes of local cohomology modules of unramified regular local ring is finite we are only interested to investigate the property $\textit{\textbf{P}}$ only for the ramified regular local ring. 

It is well known fact that every ramified regular local ring is a typical extension of unramified regular local ring, which is known as Eisenstein extension. Here, in this paper, we consider Eisenstein ring extension as well as the ring extension $V[[X,Y]][[Z_1,\ldots,Z_n]]\rightarrow S$ (see page 2 of \cite{Nu2}), where $S=V[[X,Y]][[Z_1,\ldots,Z_n]]/(XY-p)$ and where $V$ is complete DVR of mixed characteristic $p>0$. In section 2 of this paper, we study how the property $\textit{\textbf{P}}$ (as mentioned above) can come down to ramified regular local ring under the ring extensions mentioned above. 

Consider an arbitrary Noetherian ring $R$ and let $I$ be an ideal of it. For a nonzero divisor $u$, consider the short exact sequence $$0\rightarrow R\stackrel{u}{\rightarrow} R\rightarrow R/uR \rightarrow 0,$$ which leads to the following long exact sequence of local cohomologies $$..H^{i-1}_{I}(R/uR)\stackrel{\delta^{i-1}_{u,I}}\rightarrow H^{i}_{I}(R)\stackrel{u}{\rightarrow}H^{i}_{I}(R)\stackrel{\pi^i_{u,I}}\rightarrow H^{i}_{I}(R/uR) \rightarrow ..$$

We define Bockstein homomorphism as the map $$\beta^{i-1}_{u,I}=\pi^i_{u,I}\circ\delta^{i-1}_{u,I}: H^{i-1}_I(R/uR)\rightarrow H^{i}_I(R/uR).$$ In \cite{SW}, Bockstein homomorphism is studied in the context to study finiteness of associated primes of local cohomology modules for the ring of integers. There, it is shown that if $p$ is nonzero divisor of Koszul homology then Bockstein homomorphism is a zero map (see, Theorem 3.1 of \cite{SW}). Here, in section 3, we extend this result to the ramified regular local ring. In section 4, we give examples of when the property `$\textit{\textbf{P}}^{i,p}_{J,S}\equiv p\notin \Ass_{\ZZ} H^i_J(S)$' is satisfied. In Example 4.1 of \cite{SW}, it is shown that for a certain ramified regular ring, Bockstein homomorphism is not always zero. In section 5 of this paper, we study a few examples of Bockstein homomorphism for ramified regular local rings and in this context, we apply the results of section 3 to look around the situations of Example 4.1 of \cite{SW} locally. We also observe when Bockstein homomorphism of Example 4.1 of \cite{SW}, is zero.

In this paper, all Noetherian rings are $\ZZ$ algebras and are denoted by $R$, $S$. In each case, prime integer $p$ also generates prime in $R$ and $S$ (as for example if $R$ is unramified regular local ring in mixed characteristic $p>0$ the $p$ is a regular parameter and $R/pR$ is again a regular ring and hence $pR$ is a prime). Thus for any $R$-module $M$, if $p\notin \Ass_{\ZZ} M$ then $pR\notin \Ass_{R} M$. Similar is true for $S$-module $M$. 


\section{behaviour of finiteness of associated primes of local cohomology modules for ramified regular local rings}

For local ring $(R,\m,K)$ of mixed characteristic, let characteristics of $K=p$. We say $R$ is unramified if $p\notin {\m}^2$ and it is ramified if $p\in {\m}^2$. For normal local ring $(R,\m)$, consider the extension ring defined by $S=R[X]/f(X)=R[x]$ where $f(X)=X^n+ a_1X^{n-1}+\ldots +a_n$ with $a_i\in \m$ for every $i\geq 0$ and $a_n\notin {\m}^2$. This ring $(S,\n,K)$ is local and it is defined as an Eisenstein extension of $R$ and $f(X)$ is known as an Eisenstein polynomial (see page 228-229 of \cite{CRTofM}). More over from paragraph above Theorem 29.8 of \cite{CRTofM} we get that $\n= \m S+ xS \supset \m R[x] \supset {\m}^2$ and for $p\in \m$, $p\in {\n}^2$. Thus if $R$ is an unramified local ring then $S$ is a ramified local ring. 

Here we note down two important results regarding Eisenstein extensions:

(1) An Eisenstein extension of regular local ring is regular local (see Theorem 29.8 (i) of \cite{CRTofM}). Thus from above discussion we can observe that an Eisenstein extensions of an unramified regular local ring is a ramified regular local ring.

(2) Every ramified regular local ring is an Eisenstein extension of some unramified regular local ring (see Theorem 29.8 (ii) of \cite{CRTofM})

\begin{nt*}

Here we adopt the following notations: For Noetherian ring $R$ and its ideal $I=(f_1,\ldots,f_t)\subset R$, we write the symbols $H^{i}(\textbf{f}:R)$ and $H^{i}_{I}(R)$ for $i$-th Koszul cohomology and $i$-th local cohomology.
 
\end{nt*}

At first, we observe the following elementary result.

\begin{prop}
Let $R$ be a domain of characteristic $0$ and $M$ be an $R$-module. For a prime integer $p\in \ZZ$, assume that $p\notin \Ass_{\ZZ} M$. Then for every monic polynomial $f(X)\in R[X]$, $(p,f(X))$ and $(f(X),p)$ both form $M[X]$-regular sequences. In particular, both of them are $R[X]$-regular sequences as well.
\end{prop}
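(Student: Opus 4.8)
The plan is to reduce the hypothesis to a single nonzerodivisor statement and then verify the two defining conditions of a regular sequence for each of the two orderings, using nothing more than the leading-coefficient trick for monic polynomials. First I would observe that, since the associated primes of $M$ over $\ZZ$ are of the form $(0)$ or $(q)$ for a prime integer $q$, the condition $p\notin\Ass_{\ZZ} M$ is equivalent to saying that multiplication by $p$ is injective on $M$, i.e. $p$ is a nonzerodivisor on $M$. Writing $M[X]=M\otimes_R R[X]$ for the module of polynomials with coefficients in $M$, I would record the two elementary facts that drive everything: (a) any monic $f(X)\in R[X]$ is a nonzerodivisor on $N[X]$ for an arbitrary $R$-module $N$, because if $g=\sum_{j\le d} m_j X^j$ has $m_d\neq 0$ then the coefficient of $X^{d+\deg f}$ in $fg$ equals $m_d\neq 0$; and (b) division by the monic $f$ gives an isomorphism of $R$-modules $M[X]/f(X)M[X]\cong M^{\deg f}$ with basis the images of $1,X,\dots,X^{\deg f-1}$, while $M[X]/pM[X]\cong (M/pM)[X]$.

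For the sequence $(p,f(X))$: multiplication by $p$ on $M[X]$ acts coefficientwise, so it is injective because $p$ is a nonzerodivisor on $M$; thus $p$ is a nonzerodivisor on $M[X]$. Passing to the quotient, $M[X]/pM[X]\cong (M/pM)[X]$ by (b), and by (a) the monic polynomial $f(X)$ is a nonzerodivisor on $(M/pM)[X]$. This verifies both nonzerodivisor conditions, so $(p,f(X))$ is an $M[X]$-regular sequence.

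For the reversed sequence $(f(X),p)$: by (a) the monic $f(X)$ is already a nonzerodivisor on $M[X]$ itself, and here the torsion hypothesis is not even needed. By (b) the quotient $M[X]/f(X)M[X]$ is isomorphic as an $R$-module (hence as a $\ZZ$-module) to $M^{\deg f}$, and under this identification multiplication by the central element $p$ is multiplication by $p$ on each coordinate; since $p$ is a nonzerodivisor on $M$ it is a nonzerodivisor on $M^{\deg f}$, hence on $M[X]/f(X)M[X]$. This settles the second ordering. The ``in particular'' statement then follows by taking $M=R$: as $R$ is a domain of characteristic $0$ it is $\ZZ$-torsion-free, so $p\notin\Ass_{\ZZ} R$ and $M[X]=R[X]$, giving the $R[X]$-regularity of both sequences. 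I expect no genuine obstacle here; the only points requiring care are the bookkeeping of which quotient the monic $f(X)$ is tested against in each ordering, and the harmless nonvanishing requirement $M/pM\neq 0$ needed when one insists that a regular sequence leave a nonzero quotient.
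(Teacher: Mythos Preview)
Your argument is correct and, for the sequence $(p,f(X))$, essentially identical to the paper's: both reduce to the leading-coefficient observation that a monic polynomial acts injectively on $(M/pM)[X]$. The one genuine difference is in the reversed order $(f(X),p)$: the paper simply notes that $f(X)$ is $M[X]$-regular and then invokes an external result (Hochster's discussion in \emph{Solid Closure}, (12.2)(b)) to permute the regular sequence, whereas you verify directly that $p$ is a nonzerodivisor on $M[X]/f(X)M[X]$ via the isomorphism $M[X]/f(X)M[X]\cong M^{\deg f}$. Your route is more self-contained and avoids the citation; the paper's route is shorter once one is willing to quote the permutation fact. Your explicit remark that $p\notin\Ass_{\ZZ}M$ amounts to $p$ being $M$-regular, and your caveat about the nonvanishing of the final quotient, are both points the paper leaves implicit.
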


\begin{proof}
Choose a monic polynomial $f(X)=X^n+ a_1X^{n-1}+\ldots +a_n$ in $R[X]$ and consider $M[X]/pM[X]=(M/pM)[X]$ as an $(R/p)[X]$-module. Let $0\neq\overline{z}=\overline{z_0} + \overline{z_1}X +\ldots +\overline{z_t}X^t\in (M/pM)[X]$ with $\overline{z_t}\neq 0$. Now $f(X)\overline{z}=\overline{f(X)}\overline{z}=0$ implies $\overline{z_t}X^{t+n}=0$. Thus $\overline{z_t}=0$ and we have a contradiction. Thus $(p, f(X))$ is a $M[X]$-regular sequence. Since $f(X)$ is itself $M[X]$-regular, by (b) of (12.2) Discussion of \cite{Ho}, we find $(f(X), p)$ is also a $M[X]$-regular sequence. Second assertion is immediate
.
\end{proof}

From above discussion we find that a ramified regular local ring is a homomorphic image of a polynomial ring in one variable over some unramified regular local ring. In the following theorem we observe, how the property $\textit{\textbf{P}}$ comes down from polynomial ring to the ramified regular local ring.

\begin{thm}
Let $S$ be a ramified regular local ring in mixed characteristic $p>0$ and let $J\subset S$ be an ideal. Suppose, there exists an unramified regular local ring $R$ such that $S$ is an Eisenstein extension of $R$ by an Eisenstein polynomial $f(X)\in R[X]$. Let $I$ be an ideal of $R[X]$ such that $IS= J$. Assume that $p\notin \Ass_{\ZZ} H^{i}_{I}(R[X])\cup \Ass_{\ZZ} H^{i+1}_{I}(R[X])$. Then $p\notin \Ass_{\ZZ} H^i_J(S)$ and $\Ass_S H^i_J(S)$ is finite. 
\end{thm}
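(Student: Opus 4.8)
The plan is to pull the computation back to the polynomial ring $T=R[X]$, where the hypothesis lives, and then read off the $p$-torsion of $H^i_J(S)$ from that of $H^i_I(T)$ and $H^{i+1}_I(T)$. First I would use the independence of base of local cohomology: since $S=T/(f)$ is a $T$-algebra and $J=IS$, one has $H^i_J(S)\cong H^i_I(S)$, where on the right $S$ is regarded as the $T$-module $T/(f)$. As $R$ is a domain, $T$ is a domain and the monic $f$ is a nonzerodivisor, so applying $H^\bullet_I(-)$ to the Koszul sequence $0\rightarrow T\stackrel{f}{\rightarrow}T\rightarrow S\rightarrow 0$ gives exactly the long exact sequence displayed in the introduction (with $u=f$), from which I extract the short exact sequence
$$0\rightarrow H^i_I(T)/fH^i_I(T)\rightarrow H^i_J(S)\rightarrow \bigl(0:_{H^{i+1}_I(T)}f\bigr)\rightarrow 0.$$

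Next I would reduce the assertion $p\notin\Ass_{\ZZ}H^i_J(S)$ to showing that $p$ is a nonzerodivisor on each of the two outer terms. Applying the left exact functor $\mathrm{Hom}_{\ZZ}(\ZZ/p\ZZ,-)=(0:_{(-)}p)$ to the short exact sequence traps the $p$-torsion of $H^i_J(S)$ between the $p$-torsion of the two ends, so if both ends are $p$-torsion-free then so is the middle. The right-hand term is a submodule of $H^{i+1}_I(T)$, and $p$ is a nonzerodivisor on $H^{i+1}_I(T)$ by the hypothesis $p\notin\Ass_{\ZZ}H^{i+1}_I(T)$; hence it inherits no $p$-torsion, and this end is handled immediately.

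The main obstacle is the left-hand term: I must show that $p$ is a nonzerodivisor on $H^i_I(T)/fH^i_I(T)$. The hypothesis only gives that $p$ is a nonzerodivisor on $H^i_I(T)$ itself, and this does not formally survive passage to the quotient by $f$ — dividing by $f$ can manufacture $p$-torsion — so the real content is to upgrade this to the statement that $(f,p)$ is a regular sequence on $H^i_I(T)$, whose second half is precisely ``$p$ is a nonzerodivisor modulo $f$.'' This is where Proposition 2.2 is meant to intervene: for an $R$-module $M$ with no $p$-torsion it certifies that both $(p,f)$ and $(f,p)$ are regular sequences on the base change $M[X]$, and since $M[X]/fM[X]=M\otimes_R S$, it yields $p$-torsion-freeness after dividing by $f$. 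The delicate step I expect to fight with is feeding the $R[X]$-module $H^i_I(T)$ into a statement phrased for modules of the shape $M[X]$: one wants to exhibit $H^i_I(T)$, as far as its $(f,p)$-regularity is concerned, as arising from an $R$-module $M$ with $p\notin\Ass_{\ZZ}M$, so that the quotient $H^i_I(T)/fH^i_I(T)$ inherits the $p$-regularity of $M\otimes_R S$.

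Granting $p$-regularity on both ends, the trapped $p$-torsion of $H^i_J(S)$ vanishes, which is the first conclusion $p\notin\Ass_{\ZZ}H^i_J(S)$. The finiteness of $\Ass_S H^i_J(S)$ then costs nothing extra: $S$ is a Noetherian regular local ring, so Theorem 1.2 of \cite{Nu1} applies, and since (as recorded in the introduction) $p\notin\Ass_{\ZZ}H^i_J(S)$ forces $pS\notin\Ass_S H^i_J(S)$, that theorem delivers the finiteness of the set of associated primes.
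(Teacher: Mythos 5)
Your reduction is sound and takes a genuinely different, and leaner, route than the paper's. The paper never isolates your short exact sequence $0\to H^i_I(T)/fH^i_I(T)\to H^i_J(S)\to (0:_{H^{i+1}_I(T)}f)\to 0$; instead it assembles a $3\times 3$ diagram of short exact sequences on $R'=R[X]$ (rows multiplication by $f$, columns multiplication by $p$), takes long exact sequences of local cohomology in both directions, proves by a lifting argument that $f$ is injective on $H^i_{I}(R'/pR')$, and then applies the snake lemma to conclude that $\alpha_{i-1}\colon H^{i-1}_I(R'/fR')\to H^{i-1}_I(R'/(p,f)R')$ is surjective; that surjectivity kills the connecting map into $H^i_I(R'/fR')\cong H^i_J(S)$ and gives that $p$ is a nonzerodivisor there. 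The inputs are identical in the two arguments: the torsion-freeness hypotheses on $H^i_I$ and $H^{i+1}_I$, plus Proposition 2.1 (which you cite as 2.2) supplying the $(p,f)$- and $(f,p)$-regularity of $H^{i}_{I}(R[X])$. Your packaging --- trapping the $p$-torsion of $H^i_J(S)$ between the two ends --- is the more transparent one, and your handling of the right-hand end (a submodule of $H^{i+1}_I(T)$, hence $p$-torsion-free) plays exactly the role that the $H^{i+1}$ hypothesis plays in the paper via the surjectivity of $\pi_i$. The final appeal to Theorem 1.2 of \cite{Nu1} is the same in both.

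The step you say you ``expect to fight with'' is a genuine gap in your proposal --- and you should know that it is equally a gap in the paper's own proof. Proposition 2.1 is proved only for modules of the form $M[X]$: its argument compares leading coefficients, so it really uses the polynomial structure. For an arbitrary ideal $I\subset R[X]$ with $IS=J$, which is all the theorem assumes, the $R[X]$-module $H^i_I(R[X])$ need not have the form $M[X]$, so neither the statement you need ($p$ a nonzerodivisor on $H^i_I(T)/fH^i_I(T)$) nor its counterpart in the paper ($f$ a nonzerodivisor on $H^i_I(T)$ and on $H^i_I(T)/pH^i_I(T)$) is covered by it; torsion-freeness alone cannot suffice, since for instance $N=R[X]/fR[X]$ is $p$-torsion-free while $f$ annihilates $N/pN\neq 0$. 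The paper nonetheless asserts, citing Proposition 2.1, that $(p,f)$ and $(f,p)$ are $H^{i}_{I}(R[X])$-regular sequences, which is precisely the same unjustified jump. The step is legitimate exactly when $H^i_I(R[X])\cong M[X]$, e.g.\ when $I=I_0R[X]$ is extended from an ideal $I_0\subset R$: flat base change gives $H^i_{I_0R[X]}(R[X])\cong H^i_{I_0}(R)\otimes_R R[X]=M[X]$ with $M=H^i_{I_0}(R)$, and the hypothesis becomes $p\notin \Ass_{\ZZ} M$. That is the situation in all of the paper's subsequent applications (Theorem 2.3, Theorem 3.1 and the corollaries), and under that additional assumption your argument closes completely; for a general ideal $I$ of $R[X]$, neither your proposal nor the paper's proof establishes the theorem as stated.
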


\begin{proof}
Here $S=R[X]/f(X)$ where $(R,\m)$ is an unramified regular local ring and $f(X)$ be an Eisenstein polynomial. Since $f(X)$ is monic, from above proposition we get that $(p,f(X))$ and $(f(X),p)$ are both $R[X]$-regular sequences as well as $H^{i}_{I}(R[X])$-regular sequences. 

Consider the following diagram of short exact sequences where all the rows and columns are exact.

\[
\CD
@. 0@. 0@. 0@. @. \\
@. @VVV @VVV @VVV @.\\
0 @>>>R'@>f>>R'@>>>R'/fR'@>>>0 @.\\
@. @Vp VV @VVp V @VVp V @. \\
0 @>>>R'@>f>>R'@>>>R'/fR'@>>>0 @.\\
@. @VVV @VVV @VVV @. \\
0@>>>R'/pR'@>f>>R/pR'@>>>R'/(p,f)R'@>>>0 @.\\
@. @VVV @VVV @VVV @.\\
@. 0@. 0@. 0@. @. 
\endCD
\]

This above diagram yields the following diagram of long exact sequences where all the rows and columns are exact.

\[
\CD
@. @. @. @. @. H^i_{I}(R')@.\\
@.@VVV @VVV @VVV @VVV @Vp VV \\
@. H^{i-1}_{I}(R') @>f>>H^{i-1}_{I}(R')@>\psi_{i-1}>>H^{i-1}_{I}(R'/fR')@>>>H^i_{I}(R')@>f>>H^i_{I}(R')@.\\
@. @V\pi_{i-1} VV @V\pi_{i-1} VV @V\alpha_{i-1} VV @V\pi_{i} VV @V\pi_{i} VV\\
@. H^{i-1}_{I}(R'/pR') @>f>>H^{i-1}_{I}(R'/pR')@>\phi_{i-1}>>H^{i-1}_{I}((R'/(p,f)R')@>>>H^i_{I}(R'/pR')@>f>>H^i_{I}(R'/pR')@.\\
@. @VVV @VVV @VVV @VVV @. \\
@. H^{i}_{I}(R') @>f>>H^{i}_{I}(R')@>>>H^{i}_{I}(R'/fR')@>>>H^{i+1}_{I}(R') @. @.\\
@. @Vp VV @Vp VV @Vp VV @Vp VV @. \\
@. H^{i}_{I}(R') @>f>>H^{i}_{I}(R')@>>>H^{i}_{I}(R'/fR')@>>>H^{i+1}_{I}(R') @. @.\\
@. @VVV @VVV @VVV @VVV @. 
\endCD
\]

Since $\pi_{i-1}$ is surjective, from exactness $f\circ\pi_{i-1}$ maps into $\Ker(\phi_{i-1})$. Along with the injectivity of $H^i_{I}(R')\stackrel{f}{\rightarrow}H^i_{I}(R')$, we also claim the injectivity of $H^i_{I}(R'/pR') \stackrel{f}{\rightarrow}H^i_{I}(R'/pR')$. To see this consider an element $x\in H^i_{I}(R'/pR')$ such that $fx=0$. Since $\pi_i$ is surjective there exists some $z\in H^{i}_{I}(R')$ such that $\pi_i(z)=x$. From commutativity of the square in the top-right corner, we get $fz= py$ since $\pi_i(fz)=0$. Since $(p,f)$ is a weakly $H^{i}_{I}(R')$-regular sequence, there exists $z'\in H^{i}_{I}(R')$ such that $z= pz'$. This implies, $x=\pi_i(z)= \pi_i(pz')=0$ from exactness. Thus, we get the diagram below where rows are exact.

\[
\CD
@.H^{i-1}_{I}(R') @>f>>H^{i-1}_{I}(R')@>\psi_{i-1}>>H^{i-1}_{I}(R'/fR')@>>>0\\
@. @V{f\circ\pi_{i-1}} VV @VV\pi_{i-1} V @V\alpha_{i-1} VV \\
0@>>>\Ker(\phi_{i-1}) @>>>H^{i-1}_{I}(R'/pR')@>\phi_{i-1}>>H^{i-1}_{I}((R'/(p,f)R')@>>>0 @.
\endCD
\]

Since $\Coker(\pi_{i-1})=0$, using snake lemma we have $\Coker(\alpha_{i-1})=0$. This implies that $\alpha_{i-1}$ is surjective and $p$ is a nonzero divisor of $H^{i}_{IR[X]}(R[X]/fR[X])=H^{i}_{I(R[X]/fR[X])}(R[X]/fR[X])=H^{i}_{J}(S)$. Now using Theorem 1.2 of \cite{Nu1} we conclude. 
\end{proof}

Cohen structure theorem tells us that any ramified regular local ring $S$ can be written as $S= R/(h-p)R$ for some unramified regular local ring $R$, where $h$ is in the square of the maximal ideal of $R$. In several papers of Luis Nunez-Betancourt (as for example see \cite{Nu2}) the example of the following ramified regular local ring $S=V[[X,Y]][[Z_1,\ldots,Z_n]]/(XY-p)$ is given, where $V$ is complete DVR of mixed characteristic $p>0$. It is being claimed as the one of the simplest example of ramified regular local ring for which the finiteness of associated primes of its local cohomology is unknown. 

In the above context, in the next theorem, we observe partial results about the finiteness of associated primes of local cohomology modules of the ramified regular local ring $S=V[[X,Y]][[Z_1,\ldots,Z_n]]/(XY-p)$ as mentioned above, which can be obtained from an unramified regular local ring $R=V[[Z_1,\ldots,Z_n]]$.

\begin{thm}
Let $R$ be an unramified regular local ring in mixed characteristic $p>0$ and $T=R[[X,Y]]/(XY-p)$ be a ramified local ring. Let $I$ be an ideal of $R$. For an integer $i\geq 0$, assume that $p\notin \Ass_{\ZZ} H^{i}_I(R) \cup \Ass_{\ZZ} H^{i+1}_{I}(R)$. Then for the ramified regular local ring $T$, $p\notin \Ass_{\ZZ} H^i_{IT}(T)$ and $\Ass_T H^{i}_{IT}(T)$ is finite.
\end{thm}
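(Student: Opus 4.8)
The plan is to reduce the statement to the pattern of the preceding theorem, with the polynomial ring $R[X]$ replaced by $R'=R[[X,Y]]$ and the Eisenstein polynomial replaced by the single element $u=XY-p$, so that $T=R'/uR'$. The first step is to pass the hypothesis up to $R'$. Since $R'=R[[X,Y]]$ is flat over $R$, flat base change for local cohomology gives $H^j_{IR'}(R')\cong H^j_I(R)\otimes_R R'$ for every $j$. Writing $M_j=H^j_I(R)$, the assumption $p\notin\Ass_{\ZZ}M_i\cup\Ass_{\ZZ}M_{i+1}$ says that $p$ is a nonzero divisor on $M_i$ and on $M_{i+1}$; tensoring the injection $0\to M_j\xrightarrow{p}M_j$ with the flat module $R'$ then shows that $p$ is a nonzero divisor on $N_j:=M_j\otimes_R R'\cong H^j_{IR'}(R')$ for $j=i,i+1$.

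The role of the monic-polynomial proposition is now played by a direct regular-sequence verification for $u=XY-p$, and the key input is that $R\to T$ is flat. Indeed $R$ and $T$ are regular local, $R\to T$ is local, and $\dim T=\dim R+1=\dim R+\dim(T/\m_R T)$ since $T/\m_R T\cong (R/\m_R)[[X,Y]]/(XY)$ has dimension $1$; miracle flatness therefore applies. Regarding $0\to R'\xrightarrow{u}R'\to T\to 0$ as a sequence of $R$-modules and tensoring with $M_j$ over $R$, flatness of $T$ gives $\mathrm{Tor}_1^R(M_j,T)=0$ and hence an exact sequence $0\to N_j\xrightarrow{u}N_j\to M_j\otimes_R T\to 0$; thus $u$ is a nonzero divisor on $N_j$. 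Flatness of $T$ also makes $p$ a nonzero divisor on $N_j/uN_j\cong M_j\otimes_R T$, so $(u,p)$ is $N_j$-regular. Finally, reducing the last exact sequence modulo $p$ and using $\mathrm{Tor}_1^{\ZZ}(M_j\otimes_R T,\ZZ/p\ZZ)=0$ (which holds because $p$ is a nonzero divisor on $M_j\otimes_R T$) shows that $u$ is a nonzero divisor on $N_j/pN_j$, so $(p,u)$ is $N_j$-regular as well.

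With $u$ a nonzero divisor on $N_i$ and $N_{i+1}$ and with both $(p,u)$ and $(u,p)$ regular on them, I would now run the double-complex argument of the preceding theorem verbatim, built from the commuting families $0\to R'\xrightarrow{u}R'\to T\to 0$ and multiplication by $p$ on $R'$, $R'$, $T$. Exactly as there, applying the snake lemma to the truncated comparison diagram shows that the reduction map $\alpha_{i-1}\colon H^{i-1}_{IR'}(T)\to H^{i-1}_{IR'}(T/pT)$ is surjective; exactness of the corresponding column then forces the connecting map $H^{i-1}_{IR'}(T/pT)\to H^i_{IR'}(T)$ to vanish, so that $p$ is a nonzero divisor on $H^i_{IR'}(T)$. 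By independence of base, $H^i_{IR'}(T)=H^i_{IT}(T)$, whence $p\notin\Ass_{\ZZ}H^i_{IT}(T)$. Since $T$ is a regular local ring, Theorem~1.2 of \cite{Nu1} then gives that $\Ass_T H^i_{IT}(T)$ is finite.

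The step I expect to be the main obstacle is the regular-sequence verification of the second paragraph. Because $u=XY-p$ is not monic in a single variable and the modules $N_j$ are not finitely generated, the elementary leading-coefficient argument of the monic case is unavailable, and the nonzero-divisor property of $u$ must instead be drawn entirely from the flatness of $T$ over $R$ and the vanishing of $\mathrm{Tor}_1^R(M_j,T)$. Justifying flat base change for local cohomology along the completion-type map $R\to R[[X,Y]]$ and identifying $H^i_{IR'}(T)$ with $H^i_{IT}(T)$ are the other points that require care, though both are standard once flatness and independence of base are invoked.
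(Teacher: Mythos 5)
Your proposal is correct, and its core — the two‑parameter family of long exact sequences, the snake‑lemma conclusion that $\alpha_{i-1}$ is surjective, hence that the connecting map vanishes and $p$ is a nonzerodivisor on the $i$‑th local cohomology of the hypersurface, followed by Theorem 1.2 of \cite{Nu1} — is exactly the paper's diagram argument. Where you genuinely differ is in how you arrive at that diagram. The paper stays over the polynomial ring $R[X,Y]$ for the whole argument: there the hypothesis transfers by flat base change to $M[X,Y]=H^i_I(R)\otimes_R R[X,Y]$, and the regular‑sequence input for $(p,XY-p)$ and $(XY-p,p)$ is proved by the elementary leading‑coefficient computation (the analogue of Proposition 2.1); only at the very end does it pass from $S=R[X,Y]/(XY-p)$ to $T=\hat{S}=R[[X,Y]]/(XY-p)$ by flatness of completion, carrying the nonzerodivisor property along. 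You instead work over $R'=R[[X,Y]]$ from the start, and you correctly identify why the paper could not have done this naively: $H^j_I(R)\otimes_R R[[X,Y]]$ is not a module of power series with coefficients in $H^j_I(R)$ (these modules are not finitely generated), so the leading‑term argument has no purchase. Your substitute — flatness of $R\to T$ via miracle flatness (the slicing criterion, namely that $XY-p$ is a nonzerodivisor on the fibre $k[[X,Y]]$, gives the same conclusion), followed by $\Tor$‑vanishing arguments to get that $u=XY-p$ is regular on $N_j$, that $(u,p)$ is regular, and that $(p,u)$ is regular — is sound, and all four regular‑sequence facts the diagram chase needs are in place for $j=i,i+1$. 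The trade‑off: the paper keeps the module theory elementary but needs the extra step $S\to\hat{S}$ at the end (and with it the identification of the completion); your route treats $T$ directly and eliminates that step, at the price of invoking flatness of $R\to T$, which is the one nontrivial commutative‑algebra input you add. Both yield complete proofs of the statement.
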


\begin{proof}
Let $(R,\m)$ be an unramified regular local ring in mixed characteristic $p>0$. For an ideal $I\subset R$ and for an integer $i\geq 0$, we assume $p\notin \Ass_{\ZZ} H^{i}_I(R) \cup \Ass_{\ZZ} H^{i+1}_{I}(R)$. Consider the ring $R[X,Y]$ and $R[X,Y]$-module $H^{i}_{IR[X,Y]}(R[X,Y])$.From hypothesis, $p\notin \Ass_{\ZZ} H^{i}_{IR[X,Y]}(R[X,Y]) \cup \Ass_{\ZZ} H^{i+1}_{IR[X,Y]}(R[X,Y])$. Set $M= H^{i}_I(R)$, $M[X,Y]=H^{i}_{IR[X,Y]}(R[X,Y])$ and polynomial $f(X,Y)=XY-p$. 

Consider $M[X,Y]/pM[X,Y]=(M/pM)[X,Y]$ as an $(R/p)[X,Y]$-module. Let $0\neq\overline{z}=\sum^{(a,b)}_{(i,j)=(0,0)}\overline{z_{i,j}}X^iY^j $ with $\overline{z_{a,b}}\neq 0$. Now $f(X,Y)\overline{z}=\overline{f(X,Y)}\overline{z}=0$ implies $\overline{z_{a,b}}X^{a+1}Y^{b+1}=0$. Thus $\overline{z_{a,b}}=0$ and we have a contradiction. Thus $(p, f(X,Y))$ is a $M[X,Y]$-regular sequence. Since $f(X,Y)$ is itself $M[X,Y]$-regular, by (b) of (12.2) Discussion of \cite{Ho}, we find $(f(X,Y), p)$ is also a $M[X,Y]$-regular sequence. Similar argument shows that $(f(X,Y), p)$ is also a $R[X,Y]$-regular sequence. Set $f(X,Y)=f$, $R[X,Y]=R'$ and $IR[X,Y]=I'$.

Consider the following diagram of short exact sequences where all the rows and columns are exact.

\[
\CD
@. 0@. 0@. 0@. @. \\
@. @VVV @VVV @VVV @.\\
0 @>>>R'@>f>>R'@>>>R'/fR'@>>>0 @.\\
@. @Vp VV @VVp V @VVp V @. \\
0 @>>>R'@>f>>R'@>>>R'/fR'@>>>0 @.\\
@. @VVV @VVV @VVV @. \\
0@>>>R'/pR'@>f>>R/pR'@>>>R'/(p,f)R'@>>>0 @.\\
@. @VVV @VVV @VVV @.\\
@. 0@. 0@. 0@. @. \endCD
\]

This above diagram yields the following diagram of long exact sequences where all the rows and columns are exact.

\[
\CD
@. @. @. @. @. H^i_{I'}(R')@.\\
@.@VVV @VVV @VVV @VVV @Vp VV \\
@. H^{i-1}_{I'}(R') @>f>>H^{i-1}_{I'}(R')@>\psi_{i-1}>>H^{i-1}_{I'}(R'/fR')@>>>H^i_{I'}(R')@>f>>H^i_{I'}(R')@.\\
@. @V\pi_{i-1} VV @V\pi_{i-1} VV @V\alpha_{i-1} VV @V\pi_{i} VV @V\pi_{i} VV\\
@. H^{i-1}_{I'}(R'/pR') @>f>>H^{i-1}_{I'}(R'/pR')@>\phi_{i-1}>>H^{i-1}_{I'}((R'/(p,f)R')@>>>H^i_{I'}(R'/pR')@>f>>H^i_{I'}(R'/pR')@.\\
@. @VVV @VVV @VVV @VVV @. \\
@. H^{i}_{I'}(R') @>f>>H^{i}_{I'}(R')@>>>H^{i}_{I'}(R'/fR')@>>>H^{i+1}_{I'}(R') @. @.\\
@. @Vp VV @Vp VV @Vp VV @Vp VV @. \\
@. H^{i}_{I'}(R') @>f>>H^{i}_{I'}(R')@>>>H^{i}_{I'}(R'/fR')@>>>H^{i+1}_{I'}(R') @. @.\\
@. @VVV @VVV @VVV @VVV @. 
\endCD
\]
Since $\pi_{i-1}$ is surjective, from exactness $f\circ\pi_{i-1}$ maps into $\Ker(\phi_{i-1})$. Along with the injectivity of $H^i_{I'}(R')\stackrel{f}{\rightarrow}H^i_{I'}(R')$, we also claim the injectivity of $H^i_{I'}(R'/pR') \stackrel{f}{\rightarrow}H^i_{I'}(R'/pR')$. To se this consider an element $x\in H^i_{I'}(R'/pR')$ such that $fx=0$. Since $\pi_i$ is surjective there exists some $z\in H^{i}_{I'}(R')$ such that $\pi_i(z)=x$. From commutativity of the square in the top-right corner, we get $fz= py$ since $\pi_i(fz)=0$. Since $(p,f)$ is a $H^{i}_{I'}(R')$-regular sequence, there exists $z'\in H^{i}_{I'}(R')$ such that $z= pz'$. This implies, $x=\pi_i(z)= \pi_i(pz')=0$ from exactness. Thus, we get the diagram below where rows are exact.

\[
\CD
@.H^{i-1}_{I'}(R') @>f>>H^{i-1}_{I'}(R')@>\psi_{i-1}>>H^{i-1}_{I'}(R'/fR')@>>>0\\
@. @V{f\circ\pi_{i-1}} VV @VV\pi_{i-1} V @V\alpha_{i-1} VV \\
0@>>>\Ker(\phi_{i-1}) @>>>H^{i-1}_{I'}(R'/pR')@>\phi_{i-1}>>H^{i-1}_{I'}((R'/(p,f)R')@>>>0 @.
\endCD
\]

Since $\Coker(\pi_{i-1})=0$, using snake lemma we have $\Coker(\alpha_{i-1})=0$. This implies that $\alpha_{i-1}$ is surjective and $p$ is a nonzero divisor of $H^{i}_{IR[X,Y]}(R[X,Y]/fR[X,Y])=H^{i}_{I(R[X,Y]/fR[X,Y])}(R[X,Y]/fR[X,Y])=H^{i}_{IS}(S)$. Consider the completion of $S$ by its maximal ideal $(X,Y)$ and we get $\hat{S}=R[[X,Y]]/fR[[X,Y]]$ where $S\rightarrow \hat{S}$ is faithfully flat. Moreover due to flat extension, $p$ is also $H^{i}_{I\hat{S}}(\hat{S})$-regular. Set $\hat{S}=T$ and using Theorem 1.2 of \cite{Nu1} we get $\Ass_T H^{i}_{IT}(T)$ is finite.
\end{proof}

\section{behaviour of bockstein homomorphisms of local cohomology modules for ramified regular local rings}

In \cite{SW}, Bockstein homomorphism is studied for polynomial ring over $\ZZ$. There, it is shown that if prime integer $p$ is a nonzero divisor of Koszul cohomology, then Bockstein homomorphism between the local cohomology modules is a zero map. For detail see Theorem 3.1 of \cite{SW}. In this section, we study the behaviour of Bockstein homomorphism for unramified and ramified regular local ring with the help of the results of previous section. We begin the section with the following examples of unramified regular local ring.

\begin{example}

Let $V$ be a DVR in mixed characteristics whose maximal ideal is generated by prime $p$. From such a DVR, we can always construct an unramified regular local ring as described in Theorem IV.7 of \cite{Ro}: With $V$, adjoin indeterminates $X_1,\ldots,X_n$ and then localize it by maximal ideal $\M$ generated by $p$ and all the indeterminates. Clearly $V[X_1,\ldots,X_n]$ as well as $(V[X_1,\ldots,X_n])_{\M}$ are smooth $V$-algebra as described in \cite{BBLSZ} which is also known as regular algebra in \cite{CRTofM}. 

\end{example}

We observe the following remark before the main results of this section.

\begin{rem}
In the statements of the following corollaries, we have use the equivalence between the conditions `$p\notin \Ass_{\ZZ} H^{i}(\textbf{f};R) \cup \Ass_{\ZZ} H^{i+1}(\textbf{f};R)$' and `$p\notin \Ass_{\ZZ} H^{i}(\textbf{f},0;R)$'. It is a well known fact that (as for example, see Remark 1.4 of \cite{Hu}), $H^{i}(\textbf{f};R) \oplus H^{i+1}(\textbf{f};R)= H^{i}(\textbf{f},0;R)$ and from this, equivalence follows at once. 
\end{rem}

From Theorem 2.2, we have an immediate corollary, which extends the result of Bockstein homomorphism of Theorem 3.1 of \cite{SW} for ramified regular local ring.

\begin{cor}
Let $S$ be a ramified regular local ring in mixed characteristic $p>0$ and let $J\subset S$ be an ideal. Suppose, there exists an unramified regular local ring $R$ of Example 3.1, such that $S$ is an Eisenstein extension of $R$ by an Eisenstein polynomial $f(X)\in R[X]$. Let $I=(f_1,\ldots,f_t) R[X]$ be an ideal of $R[X]$ such that $IS= J$. Assume that $p\notin \Ass_{\ZZ} H^{i}(\textbf{f},0;R[X])$ (equivalently, $p\notin \Ass_{\ZZ} H^{i}(\textbf{f};R[X])\cup \Ass_{\ZZ} H^{i+1}(\textbf{f};R[X])$). Then Bockstein homomorphism $\beta^{i-1}_{p,J}: H^{i-1}_J(S/pS)\rightarrow H^{i}_J(S/pS)$ is zero.
\end{cor}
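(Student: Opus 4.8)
The plan is to reduce the vanishing of $\beta^{i-1}_{p,J}$ to the single assertion that $p$ is a nonzerodivisor on $H^i_J(S)$, and then to extract that assertion from Theorem 2.2. Since $S$ is regular local it is a domain, so $p$ is a nonzerodivisor on $S$, and applying local cohomology to $0\to S\xrightarrow{p}S\to S/pS\to 0$ gives the long exact sequence
$$\cdots\to H^{i-1}_J(S/pS)\xrightarrow{\delta^{i-1}_{p,J}}H^i_J(S)\xrightarrow{\,p\,}H^i_J(S)\xrightarrow{\pi^i_{p,J}}H^i_J(S/pS)\to\cdots$$
in which, by definition, $\beta^{i-1}_{p,J}=\pi^i_{p,J}\circ\delta^{i-1}_{p,J}$. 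The structural fact I would record first is that exactness identifies $\im(\delta^{i-1}_{p,J})$ with $\Ker\big(H^i_J(S)\xrightarrow{p}H^i_J(S)\big)$, namely the $p$-torsion submodule of $H^i_J(S)$.

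Granting this, the corollary is a one-line deduction as soon as $p$ is known to be a nonzerodivisor on $H^i_J(S)$: the $p$-torsion is then $0$, so $\delta^{i-1}_{p,J}=0$, and hence $\beta^{i-1}_{p,J}=\pi^i_{p,J}\circ 0=0$. All the weight of the proof therefore rests on the statement $p\notin\Ass_\ZZ H^i_J(S)$ (which is precisely the vanishing of that $p$-torsion), and this is exactly the conclusion of Theorem 2.2 applied to the Eisenstein extension $S=R[X]/f(X)$ with $IS=J$.

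The step I expect to be the main obstacle is feeding Theorem 2.2 the correct input. Its hypothesis is the local cohomology condition $p\notin\Ass_\ZZ H^i_I(R[X])\cup\Ass_\ZZ H^{i+1}_I(R[X])$, whereas the hypothesis available here is the Koszul condition $p\notin\Ass_\ZZ H^i(\mathbf{f},0;R[X])$, which by Remark 3.2 says only that $p$ is a nonzerodivisor on each of $H^i(\mathbf{f};R[X])$ and $H^{i+1}(\mathbf{f};R[X])$. To bridge the two I would invoke the realization $H^j_I(R[X])=\varinjlim_s H^j(f_1^s,\ldots,f_t^s;R[X])$ together with the exactness of direct limits, under which the kernel of multiplication by $p$ commutes with the colimit: if $p$ is injective on every Koszul module of the directed system then it is injective on $H^j_I(R[X])$. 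The genuinely delicate point is that the given hypothesis controls the Koszul cohomology only at the first power ($s=1$) of the generators, not at all powers $f_1^s,\ldots,f_t^s$, so I would need either a stabilization argument guaranteeing that any $p$-torsion arising at higher powers is killed by the transition maps (multiplication by $f_1\cdots f_t$), or, following the Singh--Walther strategy underlying Theorem 3.1 of \cite{SW}, a direct comparison exhibiting the local cohomology Bockstein as the colimit of the Koszul Bocksteins, each of which vanishes under the nonzerodivisor hypothesis. Once the local cohomology nonzerodivisor condition is secured, Theorem 2.2 together with the torsion computation above closes the argument.
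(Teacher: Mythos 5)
Your reduction of the corollary to the single assertion that $p$ is a nonzerodivisor on $H^i_J(S)$ is correct and matches the paper: exactness identifies $\im (\delta^{i-1}_{p,J})$ with the $p$-torsion of $H^i_J(S)$, so torsion-freeness forces $\delta^{i-1}_{p,J}=0$ and hence $\beta^{i-1}_{p,J}=\pi^i_{p,J}\circ\delta^{i-1}_{p,J}=0$; this is exactly what the paper means by concluding ``from the second diagram in the proof of Theorem 2.2.''

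However, the step you yourself flag as ``the genuinely delicate point'' --- passing from the Koszul hypothesis $p\notin \Ass_{\ZZ} H^{i}(\mathbf{f},0;R[X])$ to the local cohomology hypothesis $p\notin \Ass_{\ZZ} H^{i}_{I}(R[X])\cup \Ass_{\ZZ} H^{i+1}_{I}(R[X])$ that Theorem 2.2 requires --- is a genuine gap, and neither of your suggested repairs closes it. The colimit description $H^j_I(R[X])=\varinjlim_s H^j(f_1^s,\ldots,f_t^s;R[X])$ gives injectivity of $p$ on $H^j_I(R[X])$ only if $p$ is injective on cofinally many Koszul modules at the higher powers $f_1^s,\ldots,f_t^s$, about which the hypothesis says nothing; torsion-freeness at $s=1$ does not propagate to $s>1$ by any formal stabilization argument. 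The paper closes this gap with the one ingredient your proposal never uses: the hypothesis that $R$ is the ring of Example 3.1. That hypothesis makes $R=(V[X_1,\ldots,X_n])_{\M}$, hence $R[X]$, a smooth $V$-algebra, and Theorem 4.1 of \cite{BBLSZ} --- a nontrivial theorem about smooth algebras proved precisely to make this Koszul-to-local-cohomology jump at first powers of the generators --- then yields $p\notin \Ass_{\ZZ} H^{i}_{I}(R[X])\cup \Ass_{\ZZ} H^{i+1}_{I}(R[X])$ from the stated Koszul condition. (That your argument nowhere invokes smoothness is the tell-tale sign: without it, the statement is not known, since this bridge is exactly what is open for general regular rings.) Once that citation is inserted, your argument finishes correctly via Theorem 2.2 and the torsion computation.
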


\begin{proof}
According to the example, $R=(V[X_1,\ldots,X_n])_{\M}$. Clearly $V[X_1,\ldots,X_n]$ as well as $(V[X_1,\ldots,X_n])_{\M}$ are smooth $V$-algebra as described in \cite{BBLSZ}. Moreover adjoining indeterminate to $R$, $R[X]$ also becomes smooth over $V$. So, according to Theorem 4.1 of \cite{BBLSZ}, $p\notin \Ass_{\ZZ} H^{i}_{I}(R[X])\cup \Ass_{\ZZ} H^{i+1}_{I}(R[X])$. Thus using Theorem 2.2 we get that $p$ is a nonzero divisor of $H^{i}_J(S)$ and from the second diagram in the proof of Theorem 2.2 we can conclude.
\end{proof}

From Theorem 2.3, we have the following corollary, which extends the result of Bockstein homomorphism of Theorem 3.1 of \cite{SW}, even for ramified regular local ring $S=V[[X,Y]][[Z_1,\ldots,Z_n]]/(XY-p)$ as given in \cite{Nu2}.

\begin{cor}
Let $R$ be an unramified regular local ring in mixed characteristic $p>0$ and $T=R[[X,Y]]/(XY-p)$ be a ramified local ring. Let $I$ be an ideal of $R$. Set the generators of $I$ such that $I=(f_1,\ldots,f_t)$. For an integer $i\geq 0$, assume that $p\notin \Ass_{\ZZ} H^{i}(\textbf{f},0;R)$' (equivalently, $p\notin \Ass_{\ZZ} H^{i}(\textbf{f};R) \cup \Ass_{\ZZ} H^{i+1}(\textbf{f};R)$). Then for the ramified regular local ring $T$, Bockstein homomorphism $\beta^{i-1}_{p,IT}: H^{i-1}_{IT}(T/pT)\rightarrow H^{i}_{IT}(T/pT)$ is zero.
\end{cor}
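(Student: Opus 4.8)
The plan is to reduce everything to the single output of Theorem 2.3, that $p$ is a nonzero divisor on $H^i_{IT}(T)$, and then to read the vanishing of the Bockstein off the long exact sequence attached to multiplication by $p$. The first task is to feed Theorem 2.3 its hypothesis. By the Remark above, the assumption $p\notin \Ass_{\ZZ} H^{i}(\textbf{f},0;R)$ is the same as $p\notin \Ass_{\ZZ} H^{i}(\textbf{f};R)\cup \Ass_{\ZZ} H^{i+1}(\textbf{f};R)$, that is, $p$ is a nonzero divisor on the Koszul cohomology of $R$ in degrees $i$ and $i+1$. Exactly as in the proof of the preceding Corollary, I would upgrade this---through the comparison between Koszul and local cohomology used there (Theorem 4.1 of \cite{BBLSZ})---to the local cohomology statement $p\notin \Ass_{\ZZ} H^{i}_{I}(R)\cup \Ass_{\ZZ} H^{i+1}_{I}(R)$, which is precisely the hypothesis of Theorem 2.3. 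Applying Theorem 2.3 then gives $p\notin \Ass_{\ZZ} H^i_{IT}(T)$, i.e. multiplication by $p$ on $H^i_{IT}(T)$ is injective.

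For the conclusion I would return to the long exact sequence of local cohomology attached to $0\rightarrow T\stackrel{p}{\rightarrow}T\rightarrow T/pT\rightarrow 0$; this sequence is available because $T$, being a regular local ring, is a domain, so $p$ is a nonzero divisor on $T$ and the Bockstein $\beta^{i-1}_{p,IT}=\pi^i_{p,IT}\circ\delta^{i-1}_{p,IT}$ is defined. By exactness the image of $\delta^{i-1}_{p,IT}\colon H^{i-1}_{IT}(T/pT)\rightarrow H^i_{IT}(T)$ is the kernel of multiplication by $p$ on $H^i_{IT}(T)$. Since that multiplication is injective this kernel is $0$, hence $\delta^{i-1}_{p,IT}=0$ and therefore $\beta^{i-1}_{p,IT}=\pi^i_{p,IT}\circ\delta^{i-1}_{p,IT}=0$. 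Equivalently, one can trace the same vanishing through the second diagram in the proof of Theorem 2.3, just as the preceding Corollary does with the second diagram in the proof of Theorem 2.2.

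The long exact sequence argument is routine; the step I expect to be the real obstacle is the passage from the Koszul associated-prime condition on $R$ to the local cohomology condition that Theorem 2.3 requires. In the preceding Corollary this passage relied on the smoothness of $R[X]$ over $V$ coming from Example 3.1, whereas here $R$ is only assumed to be a general unramified regular local ring, so one must either restrict $R$ to the smooth situation of Example 3.1 or justify the comparison for $p$-torsion directly. A tempting direct route is that direct limits are exact, so the $p$-torsion of $H^i_I(R)=\varinjlim_n H^i(\textbf{f}^n;R)$ is the limit of the $p$-torsion of the Koszul cohomologies; but this forces one to control the $p$-torsion of every power $\textbf{f}^n$ rather than only of $\textbf{f}$, and closing that gap is the delicate point.
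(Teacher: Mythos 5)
Your proposal is correct and follows the paper's own proof step for step: the paper likewise passes from the Koszul hypothesis to $p\notin \Ass_{\ZZ} H^{i}_I(R)\cup \Ass_{\ZZ} H^{i+1}_{I}(R)$ by citing Theorem 4.1 of \cite{BBLSZ}, feeds that into Theorem 2.3 to make $p$ a nonzero divisor on $H^{i}_{IT}(T)$, and concludes (via the second diagram in that proof, equivalently your long exact sequence argument) that the connecting map, hence the Bockstein, vanishes. The obstacle you flag in your final paragraph is genuine, but it is the paper's obstacle as much as yours: here, unlike in the preceding corollary where $R$ is restricted to the smooth ring of Example 3.1, the paper applies Theorem 4.1 of \cite{BBLSZ} to an arbitrary unramified regular local ring $R$, which need not be smooth over a DVR, so your hesitation identifies a gap in the paper's own argument rather than a defect in your reconstruction of it.
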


\begin{proof}
Using Theorem 4.1 of \cite{BBLSZ} we find that $p\notin \Ass_{\ZZ} H^{i}_I(R) \cup \Ass_{\ZZ} H^{i+1}_{I}(R)$. The rest of the proof is similar to those of above corollaries. 
\end{proof}

For unramified regular local ring $R$ and its ideal $I$, let $S$ be a ramified regular local ring which is an Eisenstein extension of $R$. Let $J$ be an ideal of $S$ such that $\rad (J+p)= \rad (IS+p)$. Then the property $\textit{\textbf{P}}^{i,p}_{I,R}$ as stated in the introduction implies that Bockstein homomorphism for $\beta^{i-1}_{p,J}$ for $H^{i-1}_J$. is a zero map. In doing so we need the help of Lemma 2.4 of \cite{SW}. 

\begin{thm}
Let $R$ be an unramified regular local ring in mixed characteristic $p>0$. Let $S$ be any ramified regular local ring which can obtained from $R$ via an Eisenstein extension. Let $I$ be an ideal of $R$. For every integer $i\geq 0$, assume that $p\notin \Ass_{\ZZ} H^{i}_I(R) \cup \Ass_{\ZZ} H^{i+1}_{I}(R)$. Then, for every ideal $J\subset S$ such that $\rad (J+p)= \rad (IS+p)$, Bockstein homomorphism $\beta^{i-1}_p: H^{i-1}_J(S/pS)\rightarrow H^{i}_J(S/pS)$ is zero. 
\end{thm}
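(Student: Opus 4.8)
The plan is to reduce the statement, in two moves, to the single fact that $p$ is a nonzerodivisor on $H^i_{IS}(S)$, which Theorem 2.2 already supplies. The first move uses the radical hypothesis to replace the arbitrary ideal $J$ by the extended ideal $IS$; the second transports the hypothesis on $R$ up to the polynomial ring $R[X]$ so that Theorem 2.2 becomes applicable, after which the vanishing of the Bockstein is immediate from the long exact sequence.

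First I would reduce to the case $J=IS$. Reducing modulo $p$, the hypothesis $\rad(J+p)=\rad(IS+p)$ gives $\rad(J(S/pS))=\rad(IS(S/pS))$ in $S/pS$, and likewise $\rad(J+p^2S)=\rad(J+pS)=\rad(IS+pS)=\rad(IS+p^2S)$, so the functors $H^\bullet_J$ and $H^\bullet_{IS}$ agree on both $S/pS$ and $S/p^2S$. Since $\beta^{i-1}_{p,J}$ is the connecting homomorphism attached to $0\rightarrow S/pS\stackrel{p}{\rightarrow}S/p^2S\rightarrow S/pS\rightarrow 0$ under $H^\bullet_J$, it depends only on the invariant $\rad(J+pS)$; this dependence is exactly the content of Lemma 2.4 of \cite{SW}, which then identifies $\beta^{i-1}_{p,J}$ with $\beta^{i-1}_{p,IS}$ under the above identifications. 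Thus it suffices to prove $\beta^{i-1}_{p,IS}=0$.

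Next, writing $S=R[X]/f(X)R[X]$ for the Eisenstein polynomial $f$ and putting $I'=IR[X]$, so that $I'S=IS$, I would push the hypothesis from $R$ to $R[X]$. By flat base change for local cohomology along $R\rightarrow R[X]$ one has $H^j_{I'}(R[X])\cong H^j_I(R)\otimes_R R[X]=H^j_I(R)[X]$, and since $p\notin\Ass_{\ZZ}H^j_I(R)$ means $p$ is a nonzerodivisor on $H^j_I(R)$ for $j=i,i+1$, the same holds on $H^j_I(R)[X]$ by the argument of Proposition 2.1; hence $p\notin\Ass_{\ZZ}H^i_{I'}(R[X])\cup\Ass_{\ZZ}H^{i+1}_{I'}(R[X])$. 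Applying Theorem 2.2 to $I'\subset R[X]$ with $I'S=IS$ then shows that $p$ is a nonzerodivisor on $H^i_{IS}(S)$. Finally, in the long exact sequence of $0\rightarrow S\stackrel{p}{\rightarrow}S\rightarrow S/pS\rightarrow 0$ the image of $\delta^{i-1}_{p,IS}$ is the $p$-torsion submodule of $H^i_{IS}(S)$, which is zero; hence $\delta^{i-1}_{p,IS}=0$ and $\beta^{i-1}_{p,IS}=\pi^i_{p,IS}\circ\delta^{i-1}_{p,IS}=0$.

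The hard part will be the first move: a priori the description $\beta=\pi\circ\delta$ involves $H^i_J(S)$, which sees all of $\rad(J)$ and not merely $\rad(J+pS)$, so the insensitivity of the Bockstein to $J$ beyond the invariant $\rad(J+pS)$ is not visible from the defining composite and must be extracted from the reformulation via $0\rightarrow S/pS\rightarrow S/p^2S\rightarrow S/pS\rightarrow 0$, i.e.\ from Lemma 2.4 of \cite{SW}. Once this radical-invariance is secured, the remaining steps are routine, being a flat base change followed by a direct appeal to Theorem 2.2 and the exactness of the Bockstein sequence.
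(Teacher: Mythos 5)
Your proof is correct and follows essentially the same route as the paper: the paper likewise pushes the hypothesis from $R$ to $R[X]$, runs the argument of Theorem 2.2 (which it repeats verbatim rather than citing) to conclude that $p$ is a nonzerodivisor on $H^{i}_{IS}(S)$, deduces that the Bockstein for $IS$ vanishes, and then invokes Lemma 2.4 of \cite{SW} to transfer this vanishing to any $J$ with $\rad (J+p)= \rad (IS+p)$. The only differences are organizational: you cite Theorem 2.2 as a black box instead of reproducing its diagram chase, and you perform the reduction from $J$ to $IS$ at the start rather than at the end.
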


\begin{proof}
Let $(R,\m)$ be an unramified regular local ring in mixed characteristic $p>0$. For an ideal $I\subset R$ and for an integer $i\geq 0$, we assume $p\notin \Ass_{\ZZ} H^{i}_I(R) \cup \Ass_{\ZZ} H^{i+1}_{I}(R)$. Consider the ring $R[X]$ and $R[X]$-module $H^{i}_{IR[X]}(R[X])$. From hypothesis, $p\notin \Ass_{\ZZ} H^{i}_{IR[X]}(R[X]) \cup \Ass_{\ZZ} H^{i+1}_{IR[X]}(R[X])$. Let $f(X)$ be the Eisenstein polynomial. Since $f(X)$ is monic, from Proposition 2.1 we get that $(p,f(X))$ and $(f(X),p)$ are both $R[X]$-regular sequences as well as $H^{i}_{I}(R[X])$-regular sequences. 
Set $f(X)=f$, $R[X]=R'$ and $IR[X]=I'$.

Consider the following diagram of short exact sequences where all the rows and columns are exact.
\[
\CD
@. 0@. 0@. 0@. @. \\
@. @VVV @VVV @VVV @.\\
0 @>>>R'@>f>>R'@>>>R'/fR'@>>>0 @.\\
@. @Vp VV @VVp V @VVp V @. \\
0 @>>>R'@>f>>R'@>>>R'/fR'@>>>0 @.\\
@. @VVV @VVV @VVV @. \\
0@>>>R'/pR'@>f>>R/pR'@>>>R'/(p,f)R'@>>>0 @.\\
@. @VVV @VVV @VVV @.\\
@. 0@. 0@. 0@. @. \endCD
\]

This above diagram yields the following diagram of long exact sequences where all the rows and columns are exact.

\[
\CD
@. @. @. @. @. H^i_{I'}(R')@.\\
@.@VVV @VVV @VVV @VVV @Vp VV \\
@. H^{i-1}_{I'}(R') @>f>>H^{i-1}_{I'}(R')@>\psi_{i-1}>>H^{i-1}_{I'}(R'/fR')@>>>H^i_{I'}(R')@>f>>H^i_{I'}(R')@.\\
@. @V\pi_{i-1} VV @V\pi_{i-1} VV @V\alpha_{i-1} VV @V\pi_{i} VV @V\pi_{i} VV\\
@. H^{i-1}_{I'}(R'/pR') @>f>>H^{i-1}_{I'}(R'/pR')@>\phi_{i-1}>>H^{i-1}_{I'}((R'/(p,f)R')@>>>H^i_{I'}(R'/pR')@>f>>H^i_{I'}(R'/pR')@.\\
@. @VVV @VVV @VVV @VVV @. \\
@. H^{i}_{I'}(R') @>f>>H^{i}_{I'}(R')@>>>H^{i}_{I'}(R'/fR')@>>>H^{i+1}_{I'}(R') @. @.\\
@. @Vp VV @Vp VV @Vp VV @Vp VV @. \\
@. H^{i}_{I'}(R') @>f>>H^{i}_{I'}(R')@>>>H^{i}_{I'}(R'/fR')@>>>H^{i+1}_{I'}(R') @. @.\\
@. @VVV @VVV @VVV @VVV @. 
\endCD
\]
Since $\pi_{i-1}$ is surjective, from exactness $f\circ\pi_{i-1}$ maps into $\Ker(\phi_{i-1})$. Along with the injectivity of $H^i_{I'}(R')\stackrel{f}{\rightarrow}H^i_{I'}(R')$, we also claim the injectivity of $H^i_{I'}(R'/pR') \stackrel{f}{\rightarrow}H^i_{I'}(R'/pR')$. To se this consider an element $x\in H^i_{I'}(R'/pR')$ such that $fx=0$. Since $\pi_i$ is surjective there exists some $z\in H^{i}_{I'}(R')$ such that $\pi_i(z)=x$. From commutativity of the square in the top-right corner, we get $fz= py$ since $\pi_i(fz)=0$. Since $(p,f)$ is a $H^{i}_{I'}(R')$-regular sequence, there exists $z'\in H^{i}_{I'}(R')$ such that $z= pz'$. This implies $x=\pi_i(z)= \pi_i(pz')=0$ from exactness. Thus, we get the diagram below where rows are exact.

\[
\CD
@.H^{i-1}_{I'}(R') @>f>>H^{i-1}_{I'}(R')@>\psi_{i-1}>>H^{i-1}_{I'}(R'/fR')@>>>0\\
@. @V{f\circ\pi_{i-1}} VV @VV\pi_{i-1} V @V\alpha_{i-1} VV \\
0@>>>\Ker(\phi_{i-1}) @>>>H^{i-1}_{I'}(R'/pR')@>\phi_{i-1}>>H^{i-1}_{I'}((R'/(p,f)R')@>>>0 @.
\endCD
\]

Since $\Coker(\pi_{i-1})=0$, using snake lemma we have $\Coker(\alpha_{i-1})=0$. This implies that $\alpha_{i-1}$ is surjective and $p$ is a nonzero divisor of $H^{i}_{IR[X]}(R[X]/fR[X])=H^{i}_{I(R[X]/fR[X])}(R[X]/fR[X])=H^{i}_{IS}(S)$. This implies that Bockstein homomorphism $H^{i-1}_{IS}(S/pS)\rightarrow H^{i}_{IS}(S/pS)$ is zero. 

Now observe Lemma 2.4 of \cite{SW} and consider the following commutative diagram  
\[
  \xymatrix
{
  & H^{i-1}_{IS}(S/pS) 
    \ar@{->}[r]
 & H^{i}_{IS}(S/pS) 
 \\
  & H^{i-1}_{J}(S/pS) 
	\ar@{->}[u]
\ar@{->}[r]
 & H^{i}_{J}(S/pS)
  \ar@{->}[u]      
 }
\]

where each horizontal map is Bockstein homomorphism and vertical maps are isomorphism. Since upper horizontal map is a zero map, we can conclude.
\end{proof}

Although it is not clear whether we can have $\textit{\textbf{P}}^{i,p}_{J,S}$ for ramified regular local ring $S$ follows from $\textit{\textbf{P}}^{i,p}_{I,R}$ where $J\cap R=I$. 

If the unramified regular local ring $R$ is of Example 3.1, then the following corollary follows from above theorem.

\begin{cor}
Let $R$ be an unramified regular local ring in mixed characteristic $p>0$ of above Example 3.1. For an ideal $I=(f_1,\ldots,f_t)\subset R$ and for an integer $i\geq 0$, assume that $p\notin \Ass_{\ZZ} H^{i}(\textbf{f},0;R)$' (equivalently, $p\notin \Ass_{\ZZ} H^{i}(\textbf{f};R) \cup \Ass_{\ZZ} H^{i+1}(\textbf{f};R)$). Let $S$ be any ramified regular local ring which can be obtained from $R$ via an Eisenstein extension. Then, for every ideal $J\subset S$ such that $\rad (J+p)= \rad (IS+p)$, Bockstein homomorphism $\beta^{i-1}_p: H^{i-1}_J(S/pS)\rightarrow H^{i}_J(S/pS)$ is zero.
\end{cor}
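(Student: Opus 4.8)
The plan is to derive this corollary directly from Theorem 3.5, using the smoothness of the ring in Example 3.1 to supply the local cohomology hypothesis required by that theorem.

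First I would record that, by the construction in Example 3.1, $R=(V[X_1,\ldots,X_n])_{\M}$ is a smooth $V$-algebra; since $V$ is a mixed-characteristic DVR whose maximal ideal is generated by $p$, this places $R$ within the class of smooth algebras treated in \cite{BBLSZ}. The key input is Theorem 4.1 of \cite{BBLSZ}, which for such smooth algebras yields that $p$ avoids the associated primes of the relevant local cohomology modules. Concretely, I would invoke it to obtain $p\notin \Ass_{\ZZ} H^{i}_I(R)\cup \Ass_{\ZZ} H^{i+1}_{I}(R)$, where $I=(f_1,\ldots,f_t)$. This is precisely the hypothesis imposed in Theorem 3.5. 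The Koszul-cohomology formulation appearing in the statement of the corollary is merely its translation via Remark 3.2, so no separate verification of the Koszul condition is needed here: the smoothness of $R$, fed through \cite{BBLSZ}, already delivers the local cohomology version.

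With the hypothesis of Theorem 3.5 thus in hand, I would then apply that theorem verbatim. For the given ramified regular local ring $S$ obtained from $R$ by an Eisenstein extension, and for every ideal $J\subset S$ with $\rad(J+p)=\rad(IS+p)$, Theorem 3.5 asserts that the Bockstein homomorphism $\beta^{i-1}_p\colon H^{i-1}_J(S/pS)\rightarrow H^{i}_J(S/pS)$ is zero. This is exactly the desired conclusion, so the proof closes at once.

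I expect no genuine obstacle here, since the corollary is a direct specialization of Theorem 3.5 under the additional structural hypothesis that $R$ arises from Example 3.1. The only point requiring care is the citation to \cite{BBLSZ}: one must ensure that Theorem 4.1 there applies to $R$ as constructed (in particular after localization at $\M$) and yields the statement $p\notin \Ass$ rather than mere finiteness of the associated primes. Since the earlier Corollaries 3.3 and 3.4 already exploit precisely this implication in an identical fashion, invoking it once more is routine in the present setting.
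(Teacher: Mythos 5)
Your proposal is correct and is essentially identical to the paper's own proof: both deduce from the smoothness of $R=(V[X_1,\ldots,X_n])_{\M}$ (Example 3.1) together with the assumed Koszul condition, via Theorem 4.1 of \cite{BBLSZ}, that $p\notin \Ass_{\ZZ} H^{i}_{I}(R)\cup \Ass_{\ZZ} H^{i+1}_{I}(R)$, and then apply the Eisenstein-extension theorem of Section 3 (your ``Theorem 3.5'' is the paper's Theorem 3.1) verbatim. The only caution is your phrasing that smoothness ``already delivers the local cohomology version'': smoothness alone does not suffice (e.g.\ $p$ is a zerodivisor on $H^1_{(p)}(R)$), so the hypothesis $p\notin \Ass_{\ZZ} H^{i}(\textbf{f},0;R)$ must indeed be fed into the theorem of \cite{BBLSZ} as input, exactly as the paper does implicitly.
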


\begin{proof}
According to the example, $R=(V[X_1,\ldots,X_n])_{\M}$. Clearly $V[X_1,\ldots,X_n]$ as well as $(V[X_1,\ldots,X_n])_{\M}$ are smooth $V$-algebra as described in \cite{BBLSZ}. So, according to Theorem 4.1 of \cite{BBLSZ}, $p\notin \Ass_{\ZZ} H^{i}_{I}(R[X])\cup \Ass_{\ZZ} H^{i+1}_{I}(R[X])$. Thus using Theorem 3.1 above, we can conclude.
\end{proof}


\section{examples: when the property `$p\notin \Ass_{\ZZ} H^i_J(S)$' holds}

In this section we provide examples for the situations when `$\textit{\textbf{P}}^{i,p}_{J,S}\equiv p\notin \Ass_{\ZZ} H^i_J(S)$' is satisfied. Here we consider $S$ either as $R=\ZZ[X_1,\ldots,X_n]$ or as the unramified regular local ring $R_{\m}$ obtained from the localization of $\ZZ[X_1,\ldots,X_n]$ by the prime ideal $(p,X_1,\ldots,X_n)=\m$ i.e $\ZZ[X_1,\ldots,X_n]_{(p,X_1,\ldots,X_n)}$. The  example of the last ring is mentioned in Example 3.1. 

\begin{example}
For $R$ or $R_{\m}$ as mentioned above, $p$ is always a non-zero divisor for $H^0_I(R)$ and $H^0_I(R_{\m})$.  
\end{example}

\begin{example}
Consider the ring $S$ as $R$ or as $R_{\m}$ and the sequence of elements $f_1,\ldots,f_k$ in $S$ such that $f_1,\ldots,f_k,p$ forms a regular sequence. Set $J=(f_1,\ldots,f_k)S$. Then we have $k$-th Koszul cohomology $H^k(\textbf{f};S)=S/(f_1,\ldots,f_k)S$ and $p\notin \Ass_{\ZZ} H^k(\textbf{f};S)$. By Theorem 3.1 or Theorem 4.1 of \cite{BBLSZ} we find $p\notin \Ass_{\ZZ} H^i_J(S)$.
\end{example}

The following situation includes a large class of examples when $\textit{\textbf{P}}$ can be realized.

\begin{example}
Consider $R=\ZZ[X_1,\ldots,X_n]$ and let $I$ be an ideal of it. Fix two nonnegetive integers $i$ and $i+1$. From Theorem 5.13 of \cite{SW} (in this case, we have to consider $I$ to be generated by monomials) or more generally by Theorem 3.1 of \cite{BBLSZ}, we find that there can be finitely many $p$ torsion elements for each local cohomology module $H^i_I(R)$ and $H^{i+1}_I(R)$. Thus we can always choose a prime integer $p$ such that $p\notin \Ass_{\ZZ} H^{i}_{I}(R)\cup \Ass_{\ZZ} H^{i+1}_{I}(R)$. Now, if we localize $R$ at $(p,X_1,\ldots,X_n)=\m$ we get the unramified local ring $R_{\m}$ such that $p\notin \Ass_{\ZZ} H^{i}_{IR_{\m}}(R_{\m})\cup \Ass_{\ZZ} H^{i+1}_{IR_{\m}}(R_{\m})$.
\end{example}

\section{examples: bockstein homomorphism and ramified regular ring}

In this section, we study a few examples of Bockstein homomorphism in the context of ramified regular rings. In Example 4.1 of \cite{SW}, we observe a situation when for ramified regular ring, Bockstein homomorphism is nonzero. We find that, it is consistent with the results obtained here. Moreover, we also explore a situation when Bockstein homomorphism in the context of Example 4.1 of \cite{SW}, is zero. 

From the proof of Theorem 2.2 or Theorem 2.3, it is easy to see the following situation.

\begin{thm}
Let $R$ be a polynomial ring over $\ZZ[X,Y]$ of two variables. Consider the ramified regular local ring $T=(R/(XY-p))_{(X,Y)}$. For an ideal $J$ in $T$, let $I=J\cap R$ and for an integer $i\geq 0$, assume that $p\notin \Ass_{\ZZ} H^{i}_I(R) \cup \Ass_{\ZZ} H^{i+1}_{I}(R)$. Then for the ramified regular local ring $T$, Bockstein homomorphism $\beta^{i-1}_{p,IT}: H^{i-1}_{IT}(T/pT)\rightarrow H^{i}_{IT}(T/pT)$ is zero. 
\end{thm}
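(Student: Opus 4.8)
The plan is to reduce Theorem 5.1 to the already-established machinery of Section 2, specifically the proof of Theorem 2.3, by recognizing that the only new feature here is the localization at $(X,Y)$ and the passage $J \cap R = I$. First I would set up the ambient unramified ring as $R = \ZZ[X,Y]$ together with $f = XY - p$, and observe that by the Koszul/regularity argument verbatim from Theorem 2.3 (the monomial-degree computation showing $(p, f)$ and $(f,p)$ are both $H^i_{IR}(R)$-regular whenever $p \notin \Ass_{\ZZ} H^i_I(R) \cup \Ass_{\ZZ} H^{i+1}_I(R)$) one concludes that $p$ is a nonzero divisor on $H^i_{I \cdot R/(f)}(R/(f))$. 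The assumption $p \notin \Ass_{\ZZ} H^i_I(R) \cup \Ass_{\ZZ} H^{i+1}_I(R)$ here is placed directly on $R = \ZZ[X,Y]$ rather than derived from smoothness over $V$, so no appeal to Theorem 4.1 of \cite{BBLSZ} is needed; I simply feed the hypothesis into the two-variable diagram.

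Next I would carry the nonzero-divisor property through the localization at the maximal ideal $(X,Y)$. Writing $A = R/(XY-p)$, localization $A \to A_{(X,Y)} = T$ is flat, so by the standard commutation of localization with local cohomology and with the action of $p$, the element $p$ remains a nonzero divisor on $H^i_{IT}(T) = \bigl(H^i_{IA}(A)\bigr)_{(X,Y)}$. This is the exact analogue of the completion step $S \to \hat{S}$ used at the end of the proof of Theorem 2.3, with flat localization replacing faithfully flat completion. Once $p$ is a nonzero divisor on $H^i_{IT}(T)$, the Bockstein homomorphism $\beta^{i-1}_{p, IT}$ vanishing is immediate: from the second diagram in the proof of Theorem 2.3, the surjectivity of $\alpha_{i-1}$ together with the injectivity of multiplication by $p$ forces the connecting and projection maps to compose to zero, which is precisely the definition $\beta^{i-1}_{p,IT} = \pi^i \circ \delta^{i-1} = 0$ as recorded in the introduction.

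The one genuinely new ingredient is reconciling the hypothesis, stated for $I \subset R$, with the ideal $J \subset T$ via $I = J \cap R$. Here I would note that $I = J \cap R$ guarantees $IT \subseteq J$, and since the theorem's conclusion is phrased in terms of $IT$ rather than $J$ itself, no radical-equality comparison (of the kind needed in Theorem 3.1 via Lemma 2.4 of \cite{SW}) is actually required for the stated assertion: the entire argument runs with the ideal $IT$ throughout, and $J$ enters only to specify which $I$ we extend.

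The step I expect to be the main obstacle is the localization/flat base change for local cohomology at the end: one must be careful that $H^i_{IT}(T) \cong \bigl(H^i_{IA}(A)\bigr)_{(X,Y)}$ as modules carrying compatible multiplication-by-$p$, so that the nonzero-divisor property genuinely descends. This uses that local cohomology commutes with flat base change, $H^i_{IA}(A) \otimes_A T \cong H^i_{IT}(T)$, which holds because localization is flat and $IT = (IA)T$; given that isomorphism, a zero divisor relation $p \cdot \xi = 0$ in the localization lifts to $p \cdot \eta = 0$ for some nonzero $\eta$ after clearing denominators, contradicting the nonzero-divisor property upstairs. Everything else is a transcription of the two diagrams and the snake-lemma argument already appearing in Theorem 2.3.
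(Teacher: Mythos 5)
Your reduction breaks at the very first step, and the cause is exactly the piece of structure you set aside as irrelevant: $I=J\cap R$ is the \emph{contraction} of an ideal of $T=(R/(XY-p))_{(X,Y)}$, so $I$ contains $\ker(R\to T)$, i.e.\ $f=XY-p\in I$. (The paper's own remark following this theorem confirms this: for $J=(X)T$ it computes $I=(X,XY-p)R=(X,p)R$.) This is fatal for the claimed ``verbatim'' use of the regularity argument of Theorem 2.3, for two reasons. First, the monomial-degree computation there applies to $H^{i}_{I_0R_0[X,Y]}(R_0[X,Y])$ only for ideals $I_0$ extended from the coefficient ring $R_0$, since only then is that module of the form $M[X,Y]=M\otimes_{R_0}R_0[X,Y]$ with $M=H^i_{I_0}(R_0)$, so that one can reduce mod $p$ and look at a top coefficient $\overline{z_{a,b}}X^aY^b$; your $I$ is an arbitrary ideal of $\ZZ[X,Y]$, not extended from $\ZZ$, and $H^i_I(\ZZ[X,Y])$ admits no such polynomial expansion. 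Second, and worse: since $f\in I$, the module $H^i_I(R)$ and every quotient of it is $I$-power torsion, hence $f$ acts on it locally nilpotently, and a locally nilpotent operator is injective only on the zero module. So $(f,p)$ can be $H^i_I(R)$-regular only if $H^i_I(R)=0$, and $(p,f)$ only if $H^i_I(R)/pH^i_I(R)=0$, no matter what one assumes about $p$. Consequently every step of the Theorem 2.3 diagram that needs injectivity of multiplication by $f$ collapses, and with it your conclusion that $p$ is a nonzero divisor on $H^i_{IA}(A)$ for $A=R/(f)$. (Your later steps --- flat localization, and the deduction that $p$ being a nonzero divisor on $H^i_{IT}(T)$ forces $\beta^{i-1}_{p,IT}=0$ --- are fine; the gap sits entirely in the first step, which is the one you labeled routine.)

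To see the gap is real and not cosmetic, note what the hypothesis actually gives: from $0\to R\xrightarrow{f}R\to A\to 0$ one gets $0\to H^i_I(R)/fH^i_I(R)\to H^i_I(A)\to \ker\bigl(f\mid H^{i+1}_I(R)\bigr)\to 0$, and $p$ is a nonzero divisor on the right-hand term because it is a submodule of $H^{i+1}_I(R)$. But on the left-hand term one needs ``$pz\in fH^i_I(R)$ implies $z\in fH^i_I(R)$'', and this does \emph{not} follow formally from $p$ being a nonzero divisor on $H^i_I(R)$ once $f$ is locally nilpotent: take $M=\ZZ_{(p)}\oplus\ZZ_{(p)}$ with $f$ acting by $f(a,b)=(pb,0)$; then $p$ is injective on $M$ and $f^2=0$, yet $p$ kills the nonzero class of $(1,0)$ in $M/fM$. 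So the heart of Theorem 5.1 is precisely to establish this divisibility statement for the actual local cohomology modules, and your proposal contains no argument for it. For comparison, the paper's own (very terse) proof localizes $\ZZ[X,Y]$ at $(p,X,Y)$ first and then invokes the technique of Theorem 2.3 for that unramified regular local ring; the order of localization is immaterial, but your concrete claim that $(p,f)$ and $(f,p)$ are $H^i_I(R)$-regular sequences is provably false in this setting, and any correct proof has to get around exactly that point.
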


\begin{proof}
Localize $\ZZ[X,Y]$ at $(p,X,Y)$ to get the unramified local ring. Now, we can apply similar technique to `$\ZZ[X,Y]_{(p,X,Y)}$' here, which is applied to the ring `$R[[X,Y]]$' there in Theorem 2.3, to conclude for the proof.
\end{proof}

\begin{rem}
In Example 4.1 of \cite{SW}, we localize the ring $\ZZ[X,Y]/(XY-p)$ at $(X,Y)$ and we call it $T$ (as in Theorem 5.1). From the discussion of Example 4.1 of \cite{SW}, it is clear that Bockstein homomorphism $\beta^{0}_{p,IT}: H^{0}_{IT}(T/pT)\rightarrow H^{1}_{IT}(T/pT)$ is also non zero for $J=(X)T$. From Theorem 5.1, we find that $p$ should be a zero divisor i.e. $p\in \Ass_{\ZZ} H^{1}_I(R) \cup \Ass_{\ZZ} H^{2}_{I}(R)$. This can be observed by taking inverse image $I$ of $J$ in $R$, i.e. $I=J\cap R=(X, (XY-p))R= (X, p))R$. So, $p\in I$ and it a zero divisor of $H^1_I(R)$. Thus the result of Theorem 5.1 is consistent with the Example 4.1 of \cite{SW}. 
\end{rem}

Contrasting to the result of Example 4.1 of \cite{SW} we also have the following observation.

\begin{prop}
Consider the ring $T=\ZZ[X,Y]/(XY-p)$. If $I= (n)\subset \ZZ$ for integer $n\in \ZZ$, then Bockstein homomorphism $\beta^{0}_{p,IT}: H^{0}_{IT}(T/pT)\rightarrow H^{1}_{IT}(T/pT)$ is a zero map.
\end{prop}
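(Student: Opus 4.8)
The plan is to prove the statement by showing that the \emph{target} module $H^{1}_{IT}(T/pT)$ of the Bockstein map already vanishes, so that $\beta^{0}_{p,IT}$ is the zero map for trivial reasons. First I would check that $p$ is a nonzero divisor on $T$, which is exactly what makes the Bockstein defined: the polynomial $XY-p$ is irreducible in the UFD $\ZZ[X,Y]$, hence prime, so $T$ is a domain, and $p=XY\neq 0$ in $T$. Thus the sequence $0\to T\xrightarrow{p}T\to T/pT\to 0$ is short exact and yields the long exact sequence in local cohomology out of which $\beta^{0}_{p,IT}=\pi^{1}_{p,IT}\circ\delta^{0}_{p,IT}$ is assembled.

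The crux is the computation of $H^{1}_{IT}(T/pT)$. Here I would use the identification $T/pT\cong\ZZ[X,Y]/(XY-p,p)=\ZZ[X,Y]/(XY,p)=\mathbb{F}_p[X,Y]/(XY)$, under which the support ideal $IT=nT$ becomes the ideal generated by the residue $\bar n=n\bmod p$. By the independence of base property for local cohomology, viewing $T/pT$ as a module over itself gives $H^{i}_{IT}(T/pT)=H^{i}_{\bar n\,(T/pT)}(T/pT)$. Because $\mathbb{F}_p$ is a field, $\bar n$ is either a unit (precisely when $p\nmid n$) or zero (when $p\mid n$, in particular when $n=0$). In the unit case the support ideal is all of $T/pT$, so every local cohomology module vanishes; in the zero case the support ideal is $(0)$, whence $H^{i}_{(0)}(T/pT)=0$ for all $i\geq 1$. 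Either way $H^{1}_{IT}(T/pT)=0$, and therefore $\beta^{0}_{p,IT}$, whose image lies in this module, is the zero map. Equivalently, since $IT=nT$ is principal one may run the \v{C}ech complex $0\to T/pT\to (T/pT)_n\to 0$ and observe $H^{1}_{IT}(T/pT)=\Coker\bigl(T/pT\to (T/pT)_n\bigr)$, which vanishes because $(T/pT)_n$ is either $T/pT$ itself or the zero ring.

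I expect the only point needing care to be the degenerate branch $p\mid n$ (including $n=0$), where the source $H^{0}_{IT}(T/pT)=T/pT$ is itself nonzero; the conclusion nonetheless holds because it is the target, not the source, that vanishes. It is instructive to contrast this with Example 4.1 of \cite{SW} recalled in Remark 5.2: there the support ideal is generated by $X$, whose image in $\mathbb{F}_p[X,Y]/(XY)$ is a genuine zerodivisor rather than a unit or zero, so $H^{1}$ survives and the Bockstein can be nonzero. The decisive feature of the present proposition is exactly that the generator of $I$ is an ordinary integer, which upon reduction modulo $p$ is forced to become a unit or zero.
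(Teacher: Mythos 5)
Your proof is correct, but it takes a genuinely different route from the paper's at the decisive step. The paper also splits into the cases $p\mid n$ and $p\nmid n$: when $p\mid n$ it argues, as you do, that $T/pT$ is $IT$-torsion, so $H^i_{IT}(T/pT)=0$ for $i>0$ and the target of the Bockstein vanishes; but when $p\nmid n$ it instead shows that $n$ is a \emph{nonzero divisor} on $T/pT$ --- by lifting a relation $rn=pr_1$ to $\ZZ[X,Y]$, expanding the lift $z$ of $r$ in monomials, and reading off that the relevant coefficients lie in $p\ZZ$ --- so that the \emph{source} $H^0_{IT}(T/pT)$ vanishes. Your replacement for that computation, namely the observation that since $\mathbb{F}_p$ is a field the image of $n$ in $T/pT\cong\mathbb{F}_p[X,Y]/(XY)$ is either zero or a unit, so the support ideal is either $(0)$ or the unit ideal and in both cases $H^1_{IT}(T/pT)=0$, is cleaner and strictly stronger: when $p\nmid n$ it kills \emph{all} the local cohomology modules at once, whereas the paper's coefficient argument only yields $H^0=0$ (which is all it needs). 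It also sidesteps the somewhat loosely written lifting argument in the paper (the displayed expansion of $z$ there omits the coefficients in the sum and the pure powers $X^i,Y^j$ with $i,j\geq 2$, though the intended argument goes through). What the paper's route buys is that it stays entirely at the level of explicit elements and never invokes the independence-of-base identification or the vanishing of local cohomology supported at the unit ideal; what yours buys is a unified one-line mechanism --- the generator of $I$ is an integer, hence becomes a unit or zero mod $p$ --- which, as you note, is exactly the feature separating this proposition from the nonvanishing Bockstein of Example 4.1 of \cite{SW}, where the generator $X$ reduces to a genuine zerodivisor.
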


\begin{proof}
Choose $IT=(n)T$. If $p$ is a factor in $n$ then $H^0_{IT}(T/pT)= T/pT$. This implies that $H^i_{IT}(T/pT)= 0$ for $i>0$. So Bockstein homomorphism is zero. On the other hand, if $p$ is not a factor of $n$, then we can show that $n$ is a nonzero divisor of $T/pT$. For that choose $r, r_1\in T$ and assume that $rn=pr_1$ and we have to show that $r\in pT$. Take $z, z'\in \ZZ[X,Y]$ such that $\overline{z}=r$ and $\overline{z'}=r_1$. Now $rn=pr_1$ if and only if $nz=pu_1 + XYu'_1$ for some $u, u'_1\in Z[X,Y]$. Let $z=b_{00}+ b_{10}X+ b_{01}Y + \sum_{i,j\geq 1}X^iY^j$. Since $nz=pu_1 + XYu'_1$, we get $nb_{00}, nb_{10}, nb_{01}\in (p)\ZZ$. This implies $z=pw_1 + XYw'_1$ for some $w, w'_1\in Z[X,Y]$. Since $pw_1 + XYw'_1= pv_1 + (XY-p)v'_1$ for some $w, w'_1\in Z[X,Y]$, we conclude. Thus $H^0_{IT}=0$ and Bockstein homomorphism is again a zero map.
\end{proof}

\begin{example}
In Example 4.1 of \cite{SW}, we observe that for ramified ring $T=\ZZ[X,Y]/(XY-p)$ and for ideal $(X)T\subset T$, Bockstein homomorphism $\beta^{0}_{p,(X)T}: H^{0}_{(X)T}(T/pT)\rightarrow H^{1}_{(X)T}(T/pT)$ is nonzero. In fact, if we choose any ideal $J\subset T$ such that $(X)T\subset J$, then we have the following commutative diagram 

\[
  \xymatrix
{
  & H^{0}_{(X)T}(T/pT) 
    \ar@{->}[r]
 & H^{1}_{(X)T}(T/pT) 
 \\
  & H^{0}_{J}(T/pT) 
	\ar@{->}[u]
\ar@{->}[r]
 & H^{1}_{J}(T/pT)
  \ar@{->}[u]      
 }
\]

where each horizontal map is Bockstein homomorphism. Since upper horizontal map is nonzero, we get $\beta^{0}_{p,J}: H^{0}_{J}(T/pT)\rightarrow H^{1}_{J}(T/pT)$ is also non zero.
\end{example}

We conclude the section with an example of ramified local ring when Bockstein Homomorphism is zero.

\begin{example}
Here, we mention a situation, where for an ideal of a ramified regular local ring (may not be an expanded ideal) Bockstein homomorphism is a zero map: From Example 4.3 we observe that we can always get an unramified regular local ring $R$, such that for a pair of integers $i$ and $i+1$ and for an ideal $I\subset R$, we can have $p\notin \Ass_{\ZZ} H^{i}_{I}(R)\cup \Ass_{\ZZ} H^{i+1}_{I}(R)$. Consider the ramified ring $S$ which is obtained from $R$ via Eisenstein extension. For expanded ideal $IS$, take $y\in \rad (IS +pS)$. Set $J= IS+ yS$. Thus $\rad (J+pS)= \rad (\rad (IS +pS) +\rad (yS))= \rad (IS +pS)$. Thus, by Theorem 3.1 we can conclude that Bockstein homomorphism $\beta^{i-1}_p: H^{i-1}_J(S/pS)\rightarrow H^{i}_J(S/pS)$ is zero.   
\end{example}

{\textbf{Acknowledgement:}}\newline

I would like to thank Tony J. Puthenpurakal for his invaluable comments and suggestions.


\end{document}